\DeclareMathOperator*{\argmin}{arg\,min}
\theoremstyle{definition}
\newtheorem{theorem}{Theorem}
\let\old@ps@headings\ps@headings
\let\old@ps@IEEEtitlepagestyle\ps@IEEEtitlepagestyle
\def\psccfooter#1{%
    \def\ps@headings{%
        \old@ps@headings%
        \def\@oddfoot{\strut\hfill#1\hfill\strut}%
        \def\@evenfoot{\strut\hfill#1\hfill\strut}%
    }%
    \def\ps@IEEEtitlepagestyle{%
        \old@ps@IEEEtitlepagestyle%
        \def\@oddfoot{\strut\hfill#1\hfill\strut}%
        \def\@evenfoot{\strut\hfill#1\hfill\strut}%
    }%
    \ps@headings%
}
\begin{document}
\title{Computing Necessary Conditions for\\ Near-Optimality in Capacity Expansion\\ Planning Problems}




\author{\IEEEauthorblockN{Antoine Dubois\IEEEauthorrefmark{1} and
Damien Ernst\IEEEauthorrefmark{1}\IEEEauthorrefmark{2}}
\IEEEauthorblockA{\IEEEauthorrefmark{1} Department of Electrical Engineering and Computer Science, ULi\`ege, Liège, Belgium}
\IEEEauthorblockA{\IEEEauthorrefmark{2} LTCI, Telecom Paris, Institut Polytechnique de Paris, Paris, France}
}

\maketitle

\begin{abstract}

In power systems, large-scale optimisation problems are extensively used to plan for capacity expansion at the supra-national level. 
However, their cost-optimal solutions are often not exploitable by decision-makers who are preferably looking for features of solutions that can accommodate their different requirements. 
This paper proposes a generic framework for addressing this problem. 
It is based on the concept of the epsilon-optimal feasible space of a given optimisation problem and the identification of necessary conditions over this space.
This framework has been developed in a generic case, and an approach for solving this problem is subsequently described for a specific case where conditions are constrained sums of variables. 
The approach is tested on a case study about capacity expansion planning of the European electricity network to determine necessary conditions on the minimal investments in transmission, storage and generation capacity.

\end{abstract}

\begin{IEEEkeywords}
Capacity expansion planning, decision-making, epsilon-optimality, necessary conditions, optimisation
\end{IEEEkeywords}

\thanksto{\noindent Antoine Dubois is a Research Fellow of the F.R.S.-FNRS, of which he acknowledges the financial support.}

\section{Introduction} \label{sec:intro}

In the coming decades, the European power system will have to face the challenges related to the integration of massive amounts of renewable energy sources and a high level of electrification of the heating, transport and industrial sectors.

The size and level of integration of the European electricity network (\textit{i.e.}, at the transmission level, thousands of substations and power lines connecting them) entail a level of complexity in planning this transition that requires using detailed optimisation models. 
The increased sophistication of these models comes with drawbacks.
In particular, these models essentially focus on unique cost-based optimal solutions that are too restrictive and do not encompass the different requirements of many stakeholders intervening in the decision process for new investments in capacity.

In our opinion, it is preferable to provide \textit{necessary conditions} in capacity investments that guarantee a constrained suboptimality and provide a common ground over which decision-makers can settle and create solutions that accommodate their needs.
For example, we could compute the minimum required investment in transmission lines per country to ensure a maximum deviation of 10\% from the optimum.
Alternatively, one might be interested in knowing if a particular technology - for example, Li-Ion battery or some renewable energy source (RES) type - is necessary for a cost-efficient energy transition.\\

In this paper, a framework is presented to derive necessary conditions for $\epsilon$-optimality and applied to a capacity expansion planning problem.
In Section \ref{sec:literature-review}, we discuss the literature related to the optimisation concepts that underlie the framework. 
The optimisation framework itself is presented in Section \ref{sec:problem-formulation}. 
Section \ref{sec:method} specifies this framework to the case of conditions consisting of constrained sums of variables and provides a fully-defined methodology for computing \textit{non-implied necessary conditions} in such a context.
This methodology is afterwards illustrated on an expansion planning problem in Section \ref{sec:test-case}.
Section \ref{sec:conclusion} concludes with the description of future research directions.
Finally, Appendix \ref{app:network-model} gathers more detailed data on the modelling of the network used in the test case.\\

\section{Literature review} \label{sec:literature-review}

Decision-making based on optimisation results is a complex task.
Indeed, this exercise lies at the frontier between human intelligence and machine power whose coupling is challenging \cite{brill1990mga}, sometimes referred to as post-normal science \cite{ravetz1999post}. 
Decision-making is linked to \textit{deep uncertainties} \cite{yue2018review} where, among other topics, the desirability of alternative outcomes corresponding to different policy objectives is subject to disagreement among stakeholders.
These uncertainties and disagreements imply that relying on a single cost-based optimum is often not sufficient. 
Indeed, there is no guarantee that the findings obtained via this optimum will be robust regarding parameter perturbation, nor that they will satisfy conflicting objectives. 
Moreover, as shown in \cite{trutnevyte2016does}, cost-optimal scenarios are not adequate to approximate real-world problems, such as those encountered in the context of the energy transition.
This problem highlights the need for the ``role of optimisation model methods to be re-thought in full recognition of these limitations'', as suggested in \cite{brill1979use}. 
Those authors advocate optimisation methods that ``should be used to generate planning alternatives, facilitate their evaluation and elaboration, provide insights and serve as catalysts for human creativity''.

We argue that such a \textit{re-thinking} can be achieved by orienting the use of optimisation methods in the search for conditions that are respected across multiple feasible solutions and guarantee a constrained level of suboptimality.
The advantage of this approach over unique cost-optimal solutions is to provide decision elements that all stakeholders can agree on and built on using their creativity.

Those solutions can be obtained in a variety of ways. 
One possibility is the use of multi-objective (or multi-criteria) optimisation \cite{ehrgott2005multicriteria}.
In this field, one searches not for a single solution but a set of \textit{efficient} or \textit{Pareto-optimal} solutions, translating some trade-offs between objectives. 
A notable drawback with these methods is the general assumption of knowing which objectives are at stake in the problem and being able to model in some form those objectives. 
Objectives that are a priori unknown (either because of lack of knowledge or unconscious biases) or non-modellable are, thus, left apart.

A technique that circumvents this limitation is what some authors refer to as \textit{``Modeling to Generate Alternatives''} \cite{brill1982modeling}. 
It consists of exploring solutions located in the inferior or suboptimal region of an optimisation problem \cite{decarolis2011using}.
The underlying motivation of this approach is that this region might contain solutions that are better in terms of some unmodelled objectives. 
Several authors, such as \cite{PRICE2017356}, \cite{li2017investment}, \cite{nacken2019integrated},  \cite{sasse2019distributional} or \cite{NEUMANN2021106690}, exploit this technique. 
However, their main objective is to show the variety of solutions that can be extracted rather than to systematically compute conditions that are respected by those solutions.
In this paper, we present a framework that puts the identification of such conditions at the centre of the optimisation process.

Finally, in the domain of multi-objective optimisation, \cite{bandaru2017data} have surveyed advanced data-driven methods for extracting \textit{commonalities} among Pareto-optimal solutions. 
Our framework aims at providing the ground for developing such techniques in suboptimal spaces.\\

\section{Problem formulation} \label{sec:problem-formulation}

Let us consider the following optimisation problem
\begin{align} \label{equ:original-problem}
\min_{x\in\mathcal{X}} f(x)
\end{align}
with $\mathcal{X}$ being the feasible space and $f:\mathcal{X} \rightarrow \mathbb{R}^+$ the objective function. 
Let $x^*$ be an optimal solution. 
We define an \textit{$\epsilon$-optimal space} as follows:
\begin{align*} \label{equ:epsilon-space}
\mathcal{X}^\epsilon = \{x \in \mathcal{X} \mid f(x) \leq (1+\epsilon) f(x^*), \epsilon \geq 0\}.
\end{align*}
The set $\mathcal{X}^\epsilon$, depicted in Figure~\ref{fig:epsilon_optimal_space}, contains only the feasible solutions with an objective value no greater than $(1+\epsilon)f(x^*)$. 
We define $\epsilon$ as the \textit{suboptimality coefficient} of such a space, \textit{i.e.} specifying by how much the objective values of the solutions in the space deviate at most from the optimal objective value.
\begin{figure}[!ht]
    \centering
    \includegraphics[width=2.3in]{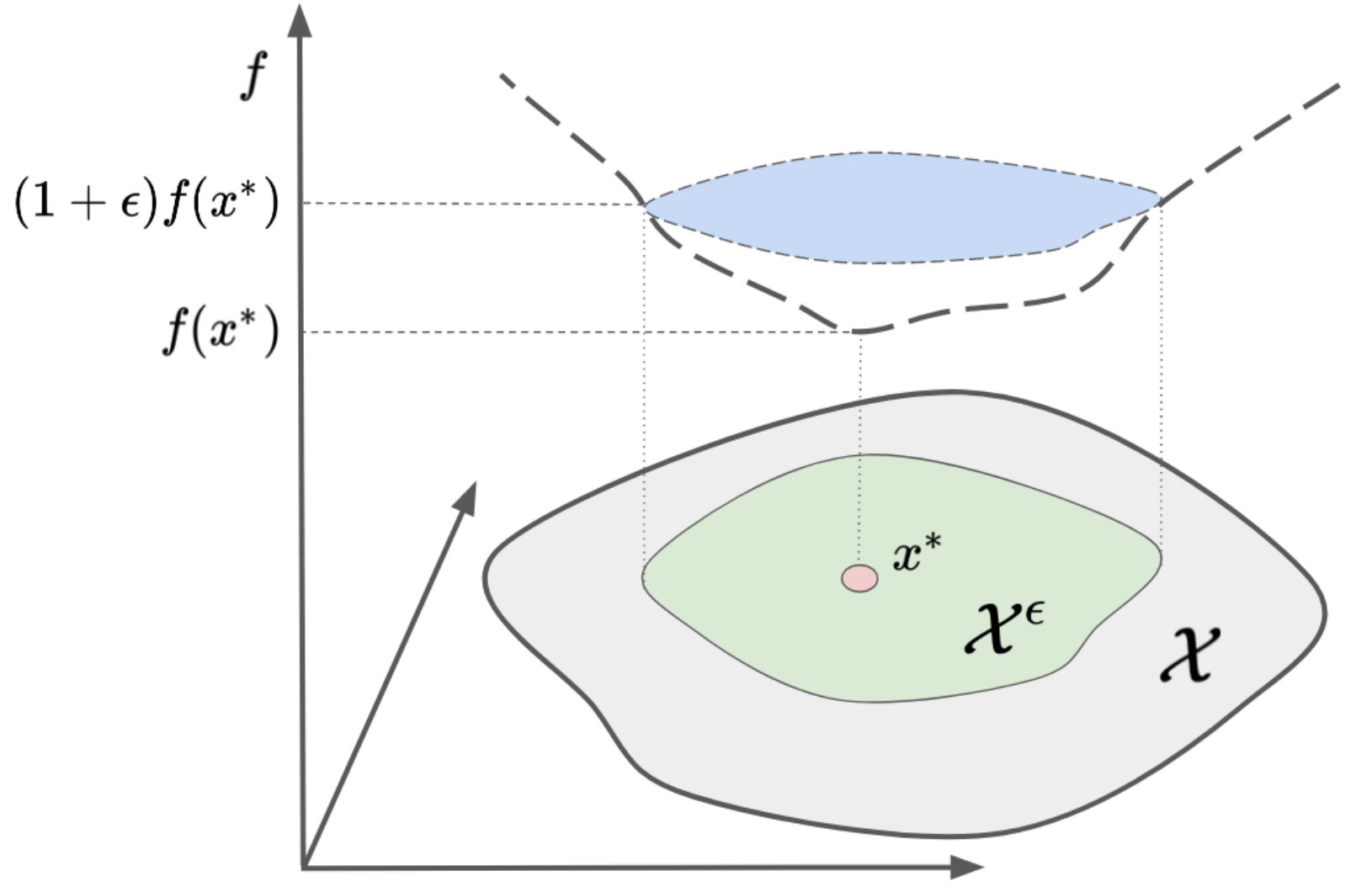}
	\caption{Three-dimensional representation of an $\epsilon$-optimal space. In the horizontal plane, the space of feasible solutions $\mathcal{X} \subset \mathbb{R}^2$ is depicted while the vertical axis represents the objective function. The red dot is the optimal solution $x^*$ corresponding to the minimal value of the objective function over the feasible space. The blue space coincides with the hyper-plane $f(x) = (1+\epsilon) f(x^*)$ allowing one to determine the $\epsilon$-optimal space $\mathcal{X}^\epsilon$ shown in green.}
	\label{fig:epsilon_optimal_space}
\end{figure}

Let us define \textit{conditions} as functions $\phi: \mathcal{X} \rightarrow \{0, 1\}$.
Our goal is to identify, among a set $\Phi$ of conditions, the ones which are true for any solutions in $\mathcal{X}^\epsilon$. 
These conditions are called \textit{necessary conditions} for $\epsilon$-optimality, where the parameter $\epsilon$ allows one to monitor the level of suboptimality of those necessary conditions.
Mathematically, 
$$\Phi^{\mathcal{X}^\epsilon} = \{\phi \in \Phi \mid \forall x \in \mathcal{X}^\epsilon: \phi(x)=1\}$$
is the set of necessary conditions for a given feasible space $\mathcal{X}$, sets of conditions $\Phi$ and suboptimality coefficient $\epsilon$.

\subsection{Non-implied necessary conditions on sets of parametric conditions}

The goal of the methodology presented in this paper is to support decision-makers in their decision process.
However, as explained in the next paragraph, even for a single set of conditions, an infinite number of necessary conditions can be derived.
Such quantity of information can not be used efficiently to take decisions.
In this section, the concept of \textit{non-implied necessary condition} is introduced as a solution to this problem. 

Let consider the feasible space $\mathcal{X} = \mathbb{R}$ and and a set of parametric conditions of the type
$$\Phi = \{\phi_c(x) \coloneqq x \geq c | c \in \mathbb{R}\}.$$
This set contains an infinite number of conditions and can lead to identifying an infinite number of necessary conditions, with which decision-makers might find it cumbersome to deal.

For instance, let $\phi_1(x) \coloneqq x > 1$ be a necessary condition for $\epsilon$-optimality (\textit{i.e.} $\forall x \in \mathcal{X}^\epsilon: \phi_1(x) = 1$).
This automatically \textit{implies} that all $\phi_c$ where $c < 1$ are necessary conditions.
Indeed $\forall x \in \mathcal{X}^\epsilon: x > 1 \Rightarrow x > c$.
The only condition that cannot be implied to be a necessary condition from the knowledge of other necessary conditions is the necessary condition $\phi_c$ with the largest value of $c$.

This necessary condition is what constitutes a \textit{non-implied necessary condition}.
This is a condition that cannot be \textit{implied} to be a necessary condition from the sole knowledge of other conditions that constitute necessary conditions.
To minimise the number of necessary conditions that need to be identified and presented to decision-makers, the focus should be placed on the identification of non-implied necessary conditions.

The notion of implication can be formalised by defining the space over which a condition $\phi$ is true,
$$\mathcal{I}_\phi = \{x\in\mathcal{X} \mid \phi(x)=1\}.$$ 
A condition $\phi_2$ implies $\phi_1$ if $\mathcal{I}_{\phi_2} \subset \mathcal{I}_{\phi_1}$, \textit{i.e.} $\phi_1$ is true for all $x \in \mathcal{X}$ over which $\phi_2$ is true.
Considering sets of parametric conditions, $\mathcal{I}_{\phi_2} = \mathcal{I}_{\phi_1}$ happens only if both conditions are equal. 
Using this notion, conditions can be defined to be necessary conditions if the space over which they are true encloses $\mathcal{X}^\epsilon$, and non-implied necessary conditions if this space does not include any of the spaces over which other necessary conditions are true. 
Mathematically, the set of non-implied necessary conditions for $\epsilon$-optimality is defined as
\begin{align*}
    \overline{\Phi}^{\mathcal{X}^\epsilon} = \{\phi \in \Phi^{\mathcal{X}^\epsilon} \mid \forall \phi' \in \Phi^{\mathcal{X}^\epsilon}\setminus\{\phi\}: \mathcal{I}_{\phi'} \not\subset \mathcal{I}_{\phi}\}.
\end{align*}

Figure~\ref{fig:true-spaces-1} provides an illustration of two necessary conditions $\phi_1$ and $\phi_2$, with $\phi_2$ implying $\phi_1$.
Considering a set of conditions $\Phi = \{\phi_1, \phi_2\}$ containing uniquely $\phi_1$ and $\phi_2$, the set of necessary conditions is given by $\Phi^{\mathcal{X}^\epsilon} = \{\phi_1, \phi_2\}$, and the set of non-implied necessary conditions by $\overline{\Phi}^{\mathcal{X}^\epsilon} = \{\phi_2\}$. 
Figure~\ref{fig:true-spaces-2} provides an illustration of two other necessary conditions $\phi_3$ and $\phi_4$, with no implication. 
Considering a set of conditions $\Phi = \{\phi_3, \phi_4\}$, then $\overline{\Phi}^{\mathcal{X}^\epsilon}  = \Phi^{\mathcal{X}^\epsilon} = \Phi$.\\ 
\begin{figure}[!t]
    \centering
    \subfloat[]{\includegraphics[width=1.5in]{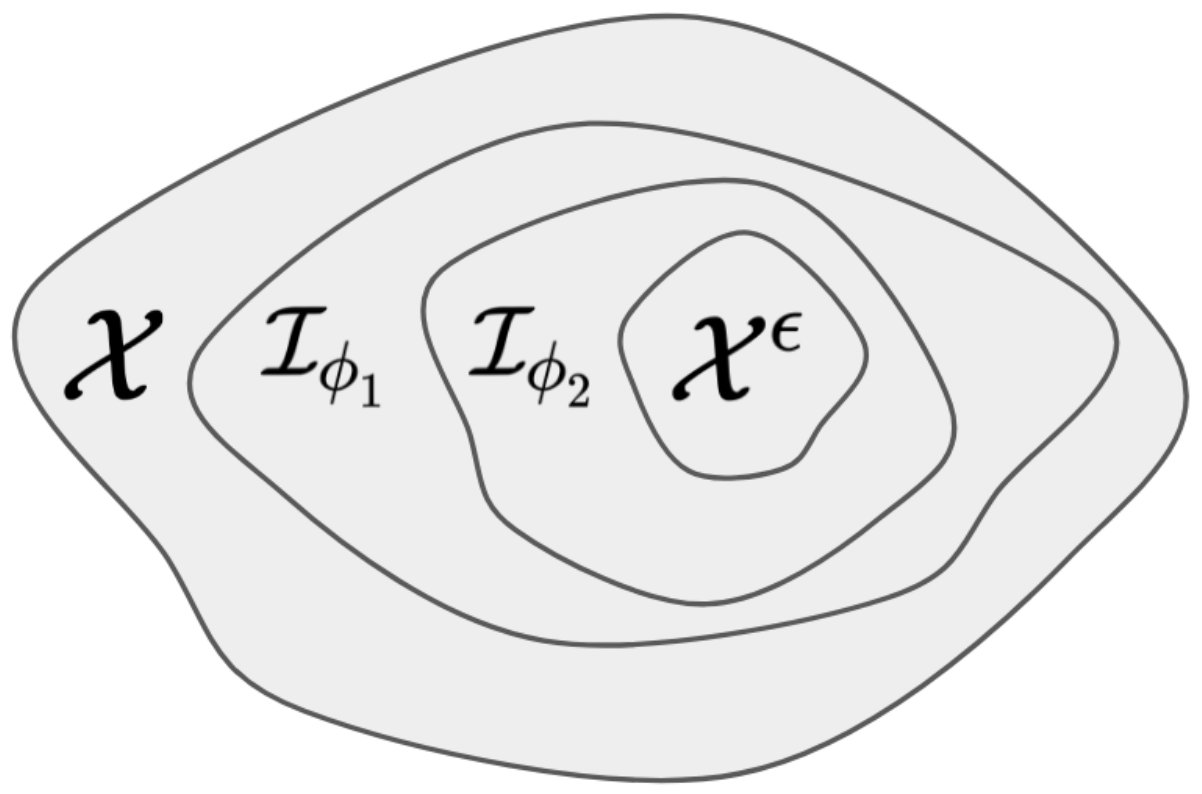}%
    \label{fig:true-spaces-1}}
    \hfil
    \subfloat[]{\includegraphics[width=1.5in]{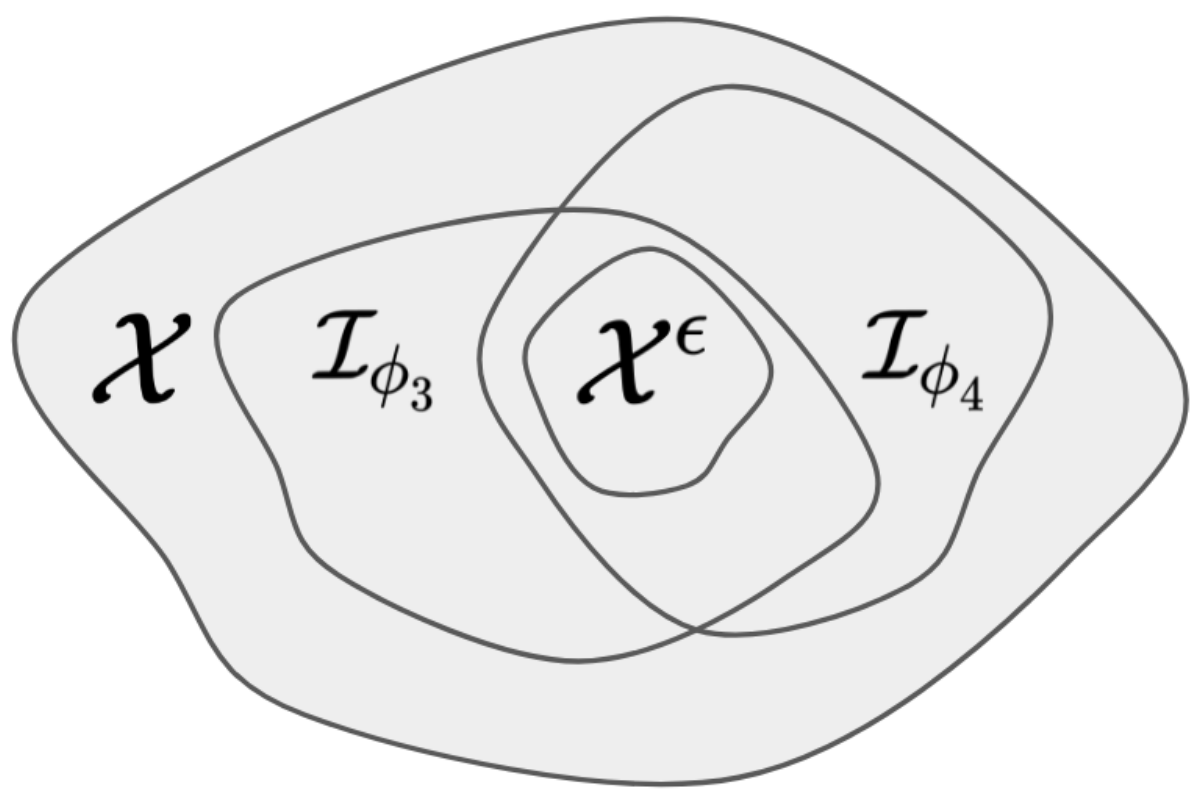}%
    \label{fig:true-spaces-2}}
    \caption{Graphical illustration of implication using spaces over which conditions are true.}
    \label{fig:true-spaces}
\end{figure}

\section{Proposed method} \label{sec:method}

In this section, a specific instance of the generic problem specified in Section \ref{sec:problem-formulation} is considered. 
This instance is characterised by conditions consisting of parametric constrained sums of variables. 
We show how for each value of the parameter defining these sums, a unique non-implied necessary condition can be determined.\\

\begin{theorem}
Let $\mathcal{X} \subset \mathbb{R}^n$, $f:\mathcal{X} \rightarrow \mathbb{R}^+$ and
\begin{align*} 
    \Phi_{\mathbf{d}} = \{&\phi_{\mathbf{d}}^ c(\mathbf{x}) \coloneqq \mathbf{d}^T\mathbf{x} \geq c \mid c \in \mathbb{R}\},
\end{align*}
where $\mathbf{x} \in \mathcal{X}$, be a set of conditions consisting of constrained sums of variables $\mathbf{d}^T\mathbf{x}=\sum_{i=1}^n d_ix_i$ defined by $\mathbf{d} \in \{0, 1\}^n$.
Let $c^* = \min_{\mathbf{x}\in\mathcal{X}^\epsilon} \mathbf{d}^T\mathbf{x}$ then
$$\phi_\mathbf{d}^{c^*}\coloneqq \mathbf{d}^T\mathbf{x} \geq c^*$$
is the only element in the set of non-implied necessary conditions $\overline{\Phi}_{\mathbf{d}}^{\mathcal{X}^\epsilon}$.\\

\end{theorem}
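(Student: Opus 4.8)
The plan is to collapse the entire statement to a one-dimensional ordering argument on the scalar $\mathbf{d}^T\mathbf{x}$, exploiting the fact that every condition in $\Phi_{\mathbf{d}}$ is a threshold on the same quantity. First I would pin down exactly which members of $\Phi_{\mathbf{d}}$ are necessary conditions. By definition $\phi_{\mathbf{d}}^c$ is necessary iff $\mathbf{d}^T\mathbf{x}\geq c$ holds for every $\mathbf{x}\in\mathcal{X}^\epsilon$, which is equivalent to $c\leq\min_{\mathbf{x}\in\mathcal{X}^\epsilon}\mathbf{d}^T\mathbf{x}=c^*$. Hence $\Phi_{\mathbf{d}}^{\mathcal{X}^\epsilon}=\{\phi_{\mathbf{d}}^c\mid c\leq c^*\}$, and in particular $\phi_{\mathbf{d}}^{c^*}$ is itself a necessary condition, since $c^*$ is a lower bound for $\mathbf{d}^T\mathbf{x}$ on $\mathcal{X}^\epsilon$.

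Next I would describe the family of true spaces. For any $c$ we have $\mathcal{I}_{\phi_{\mathbf{d}}^c}=\{\mathbf{x}\in\mathcal{X}\mid\mathbf{d}^T\mathbf{x}\geq c\}$, so the sets $\{\mathcal{I}_{\phi_{\mathbf{d}}^c}\}_c$ are totally ordered by inclusion, with the order reversed relative to $c$: if $c_1<c_2$ then $\mathcal{I}_{\phi_{\mathbf{d}}^{c_2}}\subseteq\mathcal{I}_{\phi_{\mathbf{d}}^{c_1}}$. Invoking the property stated in Section~\ref{sec:problem-formulation} that distinct parametric conditions have distinct true spaces, this inclusion is in fact strict, so $\mathcal{I}_{\phi_{\mathbf{d}}^{c_2}}\subset\mathcal{I}_{\phi_{\mathbf{d}}^{c_1}}$ whenever $c_1<c_2$.

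With these two facts the conclusion follows in both directions. To show $\phi_{\mathbf{d}}^{c^*}\in\overline{\Phi}_{\mathbf{d}}^{\mathcal{X}^\epsilon}$, take any other necessary condition $\phi_{\mathbf{d}}^{c'}$; necessity forces $c'<c^*$, and the ordering gives $\mathcal{I}_{\phi_{\mathbf{d}}^{c^*}}\subset\mathcal{I}_{\phi_{\mathbf{d}}^{c'}}$, whence $\mathcal{I}_{\phi_{\mathbf{d}}^{c'}}\not\subset\mathcal{I}_{\phi_{\mathbf{d}}^{c^*}}$, so no necessary condition implies $\phi_{\mathbf{d}}^{c^*}$. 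Conversely, any necessary condition $\phi_{\mathbf{d}}^{c'}$ with $c'<c^*$ is implied by $\phi_{\mathbf{d}}^{c^*}$, because the same ordering yields $\mathcal{I}_{\phi_{\mathbf{d}}^{c^*}}\subset\mathcal{I}_{\phi_{\mathbf{d}}^{c'}}$ with $\phi_{\mathbf{d}}^{c^*}$ itself necessary; hence $\phi_{\mathbf{d}}^{c'}\notin\overline{\Phi}_{\mathbf{d}}^{\mathcal{X}^\epsilon}$. This leaves $\phi_{\mathbf{d}}^{c^*}$ as the unique non-implied necessary condition.

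The computation is routine once the set-up is fixed, so the real work is conceptual rather than algebraic. The point that genuinely needs care is the \emph{strictness} of the inclusions, which is what separates $c^*$ from every smaller threshold and rests entirely on the stated assumption that distinct values of $c$ produce distinct true spaces; if that assumption fails (for instance when the image $\{\mathbf{d}^T\mathbf{x}\mid\mathbf{x}\in\mathcal{X}\}$ has gaps), a whole interval of thresholds could share the true space of $\phi_{\mathbf{d}}^{c^*}$ and the ``only element'' claim would break. I would therefore make this hypothesis explicit, and likewise note the mild standing assumption that the minimum defining $c^*$ is finite (e.g.\ $\mathcal{X}^\epsilon$ nonempty and $\mathbf{d}^T\mathbf{x}$ bounded below on it), so that $c^*$ is a well-defined real threshold.
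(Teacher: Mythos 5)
Your proof is correct and follows essentially the same two-step route as the paper: first characterising the necessary conditions as exactly those $\phi_{\mathbf{d}}^{c}$ with $c \leq c^*$, then using the inclusion ordering of the true spaces $\mathcal{I}_{\phi_{\mathbf{d}}^{c}}$ to single out $\phi_{\mathbf{d}}^{c^*}$ as the unique non-implied one. The only difference is cosmetic --- the paper justifies $\mathcal{I}_{\phi_{\mathbf{d}}^{c}} \not\subset \mathcal{I}_{\phi_{\mathbf{d}}^{c^*}}$ by exhibiting a witness $\mathbf{x}$ with $\mathbf{d}^T\mathbf{x}=c$, whereas you invoke the paper's blanket claim that distinct parametric conditions have distinct true spaces; both rest on the same implicit surjectivity assumption, which you rightly flag.
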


\begin{proof}

Let us first show that the set of necessary conditions is equal to $$\Phi_\mathbf{d}^{\mathcal{X}^\epsilon} = \{\phi_\mathbf{d}^c | c \leq c^*\}.$$
By definition, 
$$c^* = \min_{\mathbf{x}\in\mathcal{X}^\epsilon} \mathbf{d}^T\mathbf{x}$$
is the smallest value that $\mathbf{d}^T\mathbf{x}$ can take over $\mathcal{X}^\epsilon$. 
This implies that
$$\phi_\mathbf{d}^{c^*}(\mathbf{x}) \coloneqq \mathbf{d}^T\mathbf{x} \geq c^*$$
is true for all $\mathbf{x} \in \mathcal{X}^\epsilon$. 
Similarly, if $c < c^*$, we know that
$$\mathbf{d}^T\mathbf{x} \geq c^* > c$$
is true for all $\mathbf{x}\in\mathcal{X}^\epsilon$. 
Thus, all conditions $\phi_\mathbf{d}^c$ such that $c \leq c^*$ are necessary conditions.
For $c > c^*$ however, at the optimum $\mathbf{x}^*_\epsilon = \argmin_{\mathcal{X}^\epsilon} \mathbf{d}^T\mathbf{x}$, we have
$$\mathbf{d}^T\mathbf{x}^*_\epsilon = c^* < c$$ 
which implies that the condition 
$$\phi_\mathbf{d}^c(\mathbf{x}) \coloneqq \mathbf{d}^T\mathbf{x} > c$$
is not true for all $\mathbf{x}$ in $\mathcal{X}^\epsilon$,  as $\mathbf{x}^*_\epsilon \in \mathcal{X}^\epsilon$.
Therefore, all conditions $\phi_\mathbf{d}^c$ such that $c > c^*$ are not necessary conditions.\\

\noindent Now let us prove
$$\overline{\Phi}_\mathbf{d}^{\mathcal{X}^\epsilon} = \{\phi_\mathbf{d}^{c^*}\}.$$
This means that all $\phi_\mathbf{d}^c$ with $c < c^*$ are implied by and do not imply $\phi_\mathbf{d}^{c^*}$. 
This can be shown by proving that, for any $c < c^*$,
$$\mathcal{I}_{\phi_\mathbf{d}^{c^*}} \subset \mathcal{I}_{\phi_\mathbf{d}^c} \text{ and } \mathcal{I}_{\phi_\mathbf{d}^{c}} \not\subset \mathcal{I}_{\phi_\mathbf{d}^{c^*}}.$$
We have $\mathcal{I}_{\phi_\mathbf{d}^{c^*}} \subset \mathcal{I}_{\phi_\mathbf{d}^c}$ because, as shown before, for any $\mathbf{x}$, if $\phi_\mathbf{d}^{c^*}(\mathbf{x})$ is true, $\phi_\mathbf{d}^c(\mathbf{x})$ with $c<c^*$ is also true. 
Moreover, $\mathcal{I}_{\phi_\mathbf{d}^{c}} \not\subset \mathcal{I}_{\phi_\mathbf{d}^{c^*}}$. 
Indeed, the element $\mathbf{x}$ such that 
$$\mathbf{d}^T\mathbf{x} = c$$ 
is an element of $\mathcal{I}_{\phi_\mathbf{d}^c}$ but not of $\mathcal{I}_{\phi_\mathbf{d}^{c^*}}$.

\end{proof}

\section{Test case} \label{sec:test-case}

This methodology will now be applied to a specific test case.
The test case is articulated around the problem of capacity expansion planning of the European electricity grid within the objective of the European Union to be carbon-neutral by 2050.
Typically, the objective of this problem is to determine capacity investments in transmission, generation and storage assets as well as operation of those assets to satisfy electrical demand while minimising capital and marginal costs.

Decision-makers might be interested in knowing the necessary conditions on the required amount of capacity to be invested in each of those technologies at the European and national levels to ensure that they do not experience more than a well-specified level of cost-suboptimality.

Our methodology will be applied to this problem for computing non-implied necessary conditions for achieving $\epsilon$-optimality on five technologies.
More specifically, required minimum investments are first computed for groups of lines at the European, national and individual-line levels.
Then, necessary conditions are determined for storage and RES generation, including onshore wind, offshore wind, and utility-scale PV, over the whole network.

In the following section, a short contextualisation of the test case is presented. 
The test case is then defined following the terms of the methodology presented above. 
It is followed by a short analysis of the optimal solution of the expansion planning problem before describing necessary conditions.

\subsection{Context} \label{sec:test-case-context}

The geographical scope of the expansion planning problem is set to Europe.
All countries in the European continent are included, except for Russia, Iceland and some small countries such as Cyprus, Malta and Liechtenstein. 
In this problem, the network is represented as a grid made up of nodes and lines.
When applied at the European level, nodes are generally clustered by country while lines correspond to aggregations of pre-existing or planned transmission lines between these countries. 
Figure~\ref{fig:topologies-a} shows the nodes and lines forming the network. 
In addition, generators and storage devices are attached to each of those nodes.
The temporal scope of the problem is set to one full year, corresponding to the year 2050. 
More details on the modelling of the network can be found in Appendix~\ref{app:network-model}.

\subsection{Optimisation problem} \label{sec:test-case-optimisation}

The expansion planning problem is solved using linear optimisation via the open-source tool PyPSA \cite{PyPSA}. 
In this context, the elements composing problem \eqref{equ:original-problem} are described briefly below.\\

\noindent {\bf Objective function $f$.}
The objective of the problem is to minimise the total annual system cost. 
To be more specific, the objective function $f$ corresponds to the sum of annualised capital fixed costs and variable costs of generation, storage and transmission across the network.\\

\noindent {\bf Feasible space $\mathcal{X}$.}
%
The feasible space can be modelled as $\mathcal{X} = \{\mathbf{x} \in \mathbb{R}^n | A\mathbf{x} \geq \mathbf{b}, A \in \mathbb{R}^{m\times n}, \mathbf{b}\in \mathbb{R}^m\}$ with $m\in \mathbb{N}$ and $n \in \mathbb{N}_0$.
The variables $\mathbf{x}$ correspond to investment (\textit{i.e.} how much capacity must be added where and to what technology) and operational variables (\textit{e.g.} which quantity of energy each generator must produce at each time step).
All variables are continuous, as investments are continuous and unit commitment is not modelled. 
The bounds on those variables are composed via technical and physical constraints modelled as linear constraints.
In addition, a constraint imposing a 99\% reduction on CO2 emissions compared to 1990 levels is added.
This value is set Europe-wise and is not set to 100\% to ensure the feasibility of the problem.\\

\begin{figure}[!t]
    \centering
    \subfloat[Initial capacity.]{\includegraphics[width=2.2in]{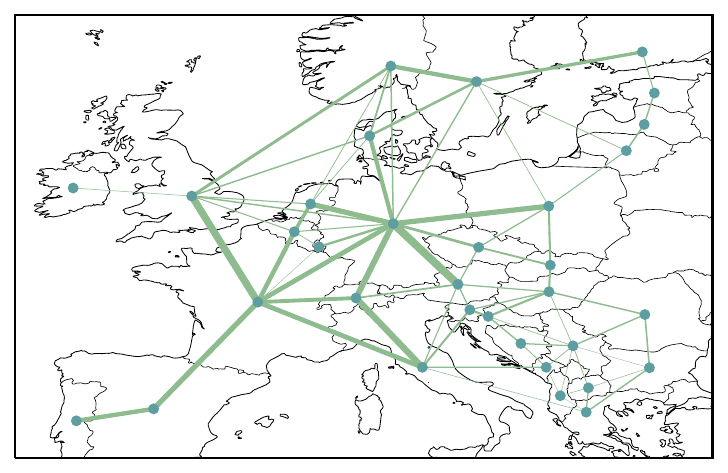}%
    \label{fig:topologies-a}}
    \hfil
    \subfloat[Additional capacity to be cost-optimal.]{\includegraphics[width=2.2in]{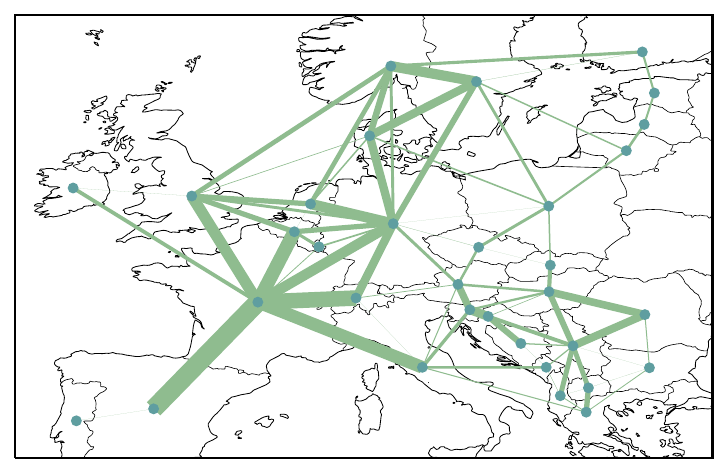}%
    \label{fig:topologies-b}}
    \caption{European electricity grid representation with the width of lines proportional to capacity in GW.}
    \label{fig:topologies}
\end{figure}

\noindent {\bf Set of conditions $\Phi$.}
As in Section \ref{sec:method}, to compute non-implied necessary conditions corresponding to minimum capacity investments, sets of conditions of form 
\begin{align*}
   \Phi_{\mathbf{d}} = \{\mathbf{d}^T\mathbf{x}_I\geq c\mid c \in \mathbb{R}\}
\end{align*}
are used, with $\mathbf{x}_I$ a vector of size $|I|$ that collects the different investment variables and $\mathbf{d} \in \{0, 1\}^{|I|}$.
Depending on the type of investment variables for which a non-implied necessary condition is desired, it suffices to define an appropriate $\mathbf{d}$, \textit{i.e.} whose elements corresponding to the required variables are set to 1, and others to 0.\\
%

\noindent {\bf Suboptimality coefficient $\epsilon$.}
As mentioned above, necessary conditions are valid for a given value of the suboptimality coefficient $\epsilon$. 
In this study, necessary conditions are computed for different values of $\epsilon$ - ranging from 0\% (\textit{i.e.} optimality) to 20\% - to see how minimal capacity investment evolves with the suboptimality coefficient.\\

\noindent {\bf Computation of non-implied necessary conditions.}
As a reminder, the computation consists in the following steps:
\begin{enumerate}
    \item Compute an optimal solution $\mathbf{x}^*$ for problem \eqref{equ:original-problem}.
    \item For a given suboptimal coefficient $\epsilon$, compute an $\epsilon$-optimal space $\mathcal{X}^\epsilon$ using this solution.
    \item For this $\epsilon$ and a value of $\mathbf{d}$, extract a non-implied necessary condition by solving $\min_{\mathbf{x} \in \mathcal{X}^\epsilon} \mathbf{d}^T\mathbf{x}$.
    \item Repeat step (2) and (3) to obtain non-implied necessary conditions for different values of $\epsilon$ and $\mathbf{d}$.
\end{enumerate}

\subsection{Optimal solution.}

\begin{table*}[!ht]
\renewcommand{\arraystretch}{1.3}
\centering
\caption{Optimal Capacities.}
\label{tab:optimal-capacities}
\begin{tabular}{|c|c|c||c|c|c||c|c||c|}
    \hline
    \multicolumn{3}{|l||}{TWkm} & \multicolumn{6}{l|}{GW}\\
    \hline
    AC & DC & AC+DC & Onshore wind & Offshore wind & Utility PV & CCGT & OCGT & Li-Ion\\
    \hline
    128 & 90 & 218 & 168 & 327 & 367 & 49 & 0 & 249\\
    \hline
\end{tabular}
\end{table*}

\begin{figure*}[!t]
    \centering
    \subfloat[Sum of the capacities of all lines.]{\includegraphics[height=1.65in]{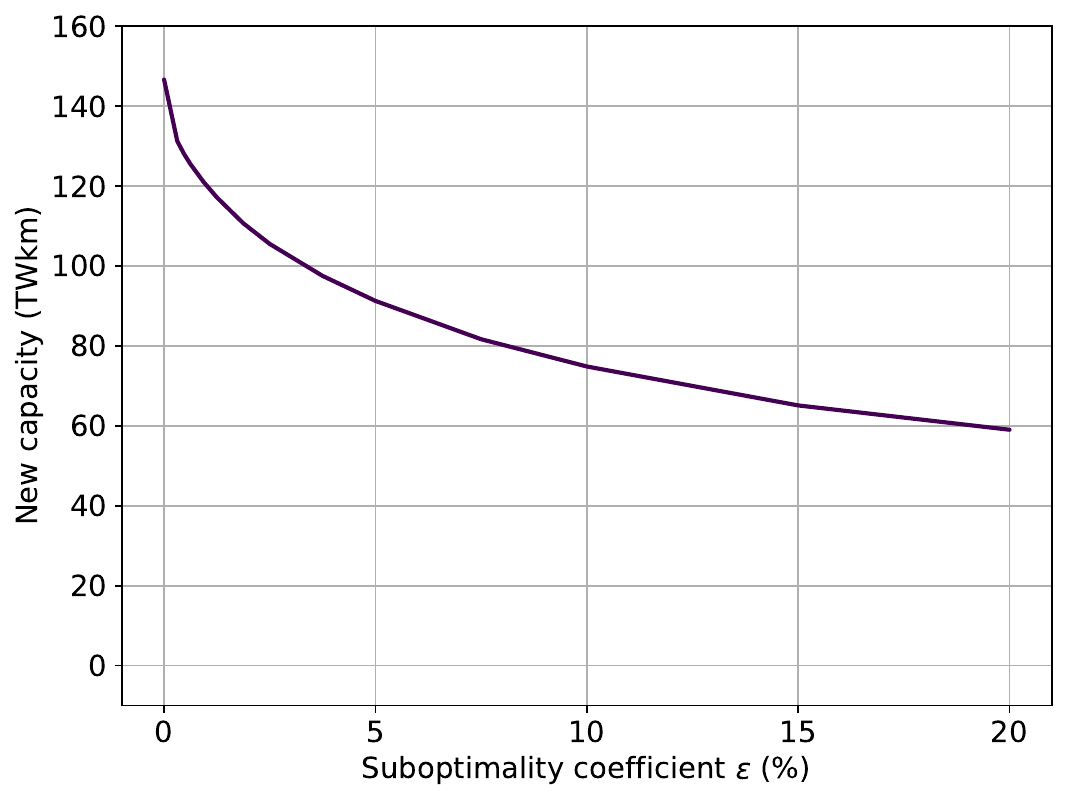}
    \label{fig:whole-cap}}
    \hfil
    \subfloat[Sum of the capacities of country lines.]{\includegraphics[height=1.65in]{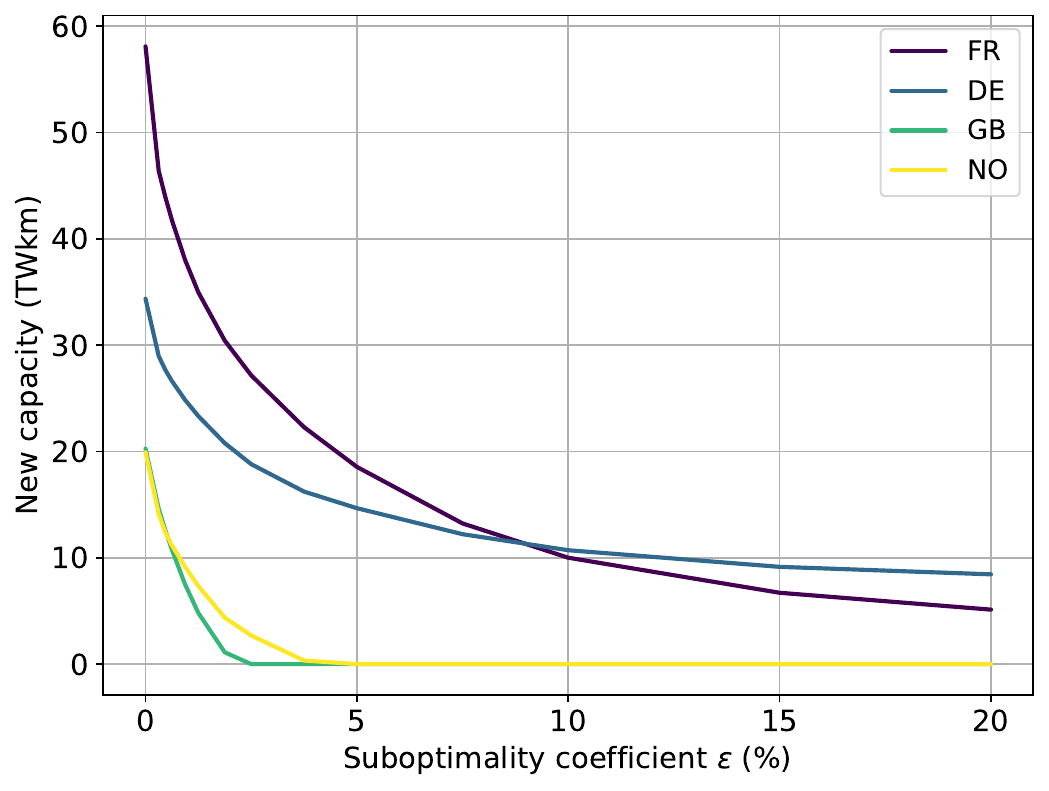}%
    \label{fig:countries-cap}}
    \hfil
    \subfloat[Capacity of individual lines.]{\includegraphics[height=1.65in]{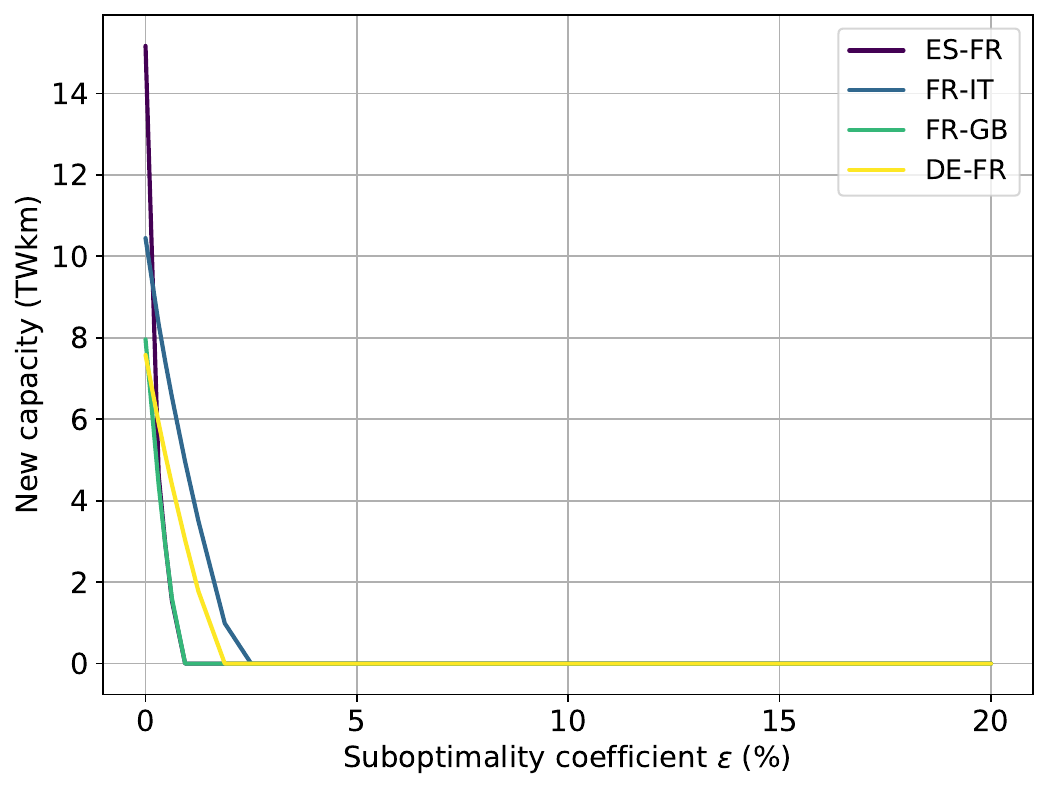}%
    \label{fig:lines-cap}}
    \hfil\\
    \subfloat[Sum Li-Ion batteries capacities.]{\includegraphics[height=1.65in]{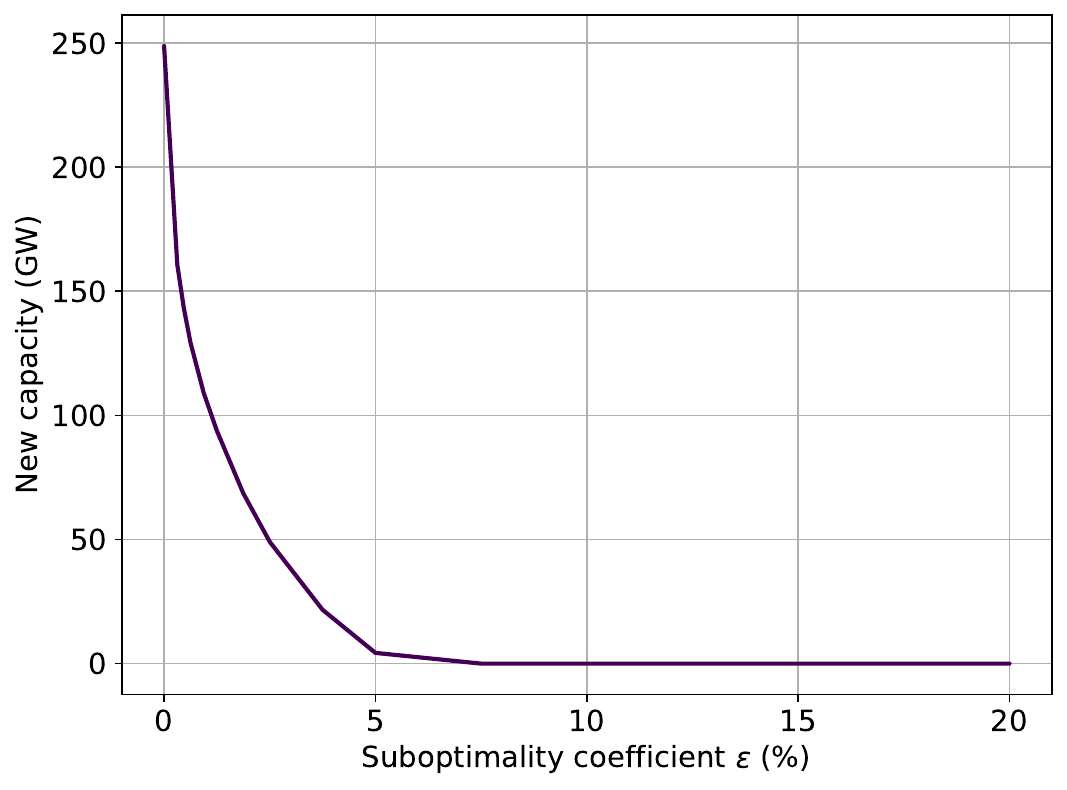}%
	\label{fig:storage-cap}}
	\hfil
    \subfloat[Sum of renewable energy generators capacities (onshore wind, offshore wind, utility PV and sum of the three).]{\includegraphics[height=1.65in]{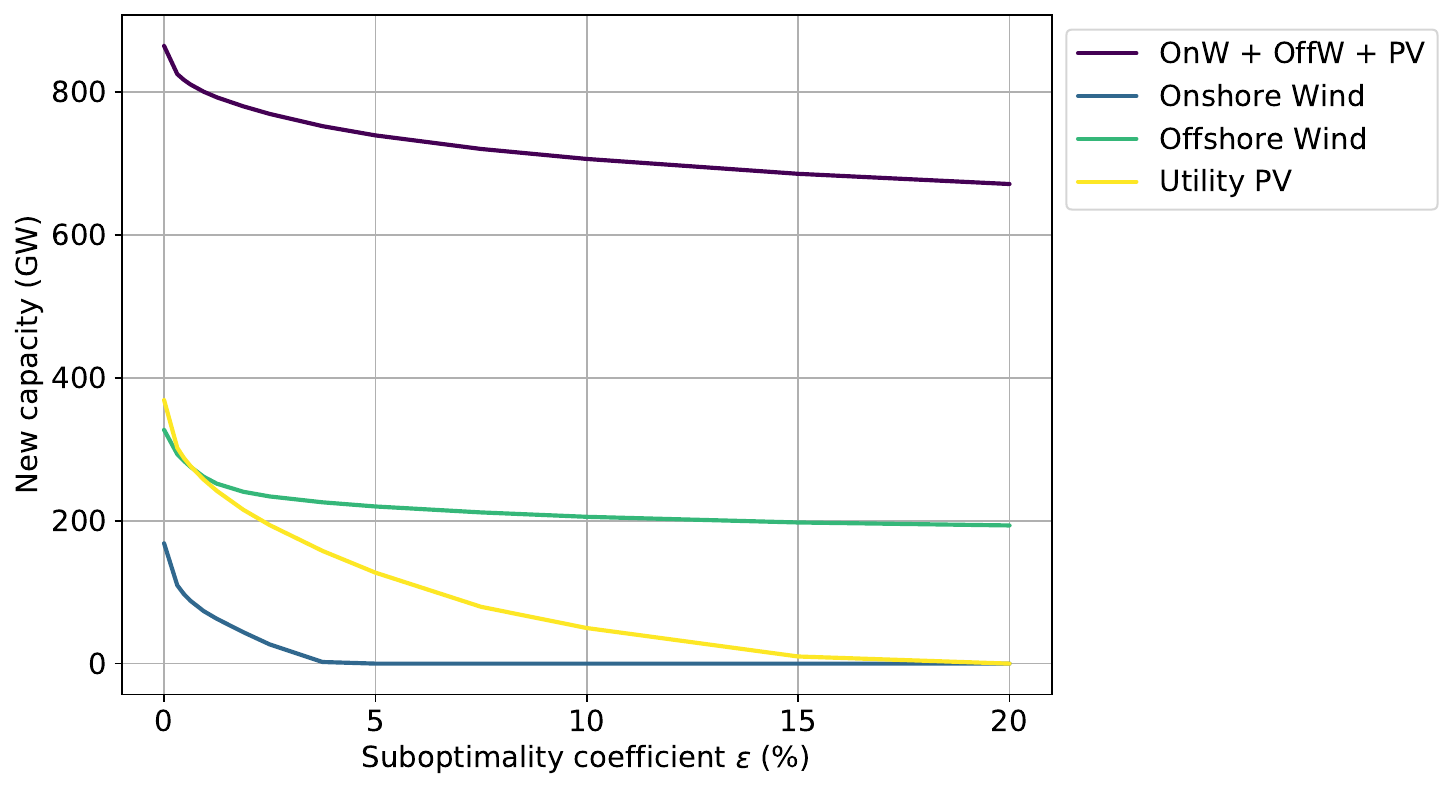}
	\label{fig:generation-cap}}
    \caption{Non-implied necessary conditions on new capacity for different levels of suboptimality.}
    \label{fig:transmission-cap}
\end{figure*}

Figure~\ref{fig:topologies} shows the initial topology - with the widths of lines proportional to their capacity in GW - and how much capacity should be added to be optimal (\textit{i.e.} obtained from $\mathbf{x}^*$).
Table~\ref{tab:optimal-capacities} lists the optimal capacities for the technologies that are expanded in the problem.
Capacities of lines are expressed in TWkm, \textit{i.e.} the power capacity installed over a given distance.

For transmission, with the addition of 146 TWkm of lines, the initial 72 TWkm is tripled.
Capacity is added to almost all lines, but major investments are made in some countries like Germany and France.
In these particular cases, the sums of the optimal capacities of the lines connected to the country are equal to 79 TWkm (from an initial capacity of 21 TWkm) and 57 TWkm (from 23 TWkm).

The capacities of RES, including onshore wind, offshore wind and utility PV, are massive, reaching 862 GW across the three technologies.
Gas plants are also deployed though on a smaller scale due to the constraint on CO2 emissions.
Finally, a substantial amount of Li-Ion batteries is built all around Europe with the main hub being in Spain where a lot of PV is also deployed.
Note that the initial capacity of these technologies (\textit{i.e.}, RES, storage and gas) are null.

From this optimal solution, the $\epsilon$-optimal spaces corresponding to the values of $\epsilon$ listed in the previous section are defined and necessary conditions for transmission, Li-Ion storage and RES generation are computed.

\subsection{Necessary conditions on transmission capacity}

In this case, the variables of interest correspond to transmission capacity variables, \textit{i.e.} how much capacity - in TWkm - should be added to each line.
We minimise capacity in TWkm (\textit{i.e.} the power capacity of the line multiplied by its length) because this value is a good representative of both the physical and economic investments in transmission assets.
Necessary conditions are first computed for the sum of capacities of all lines in the network. 
The analysis is then refined by looking at the sum of capacities of lines connected to a given country and, finally, at the capacities of unique lines.\\

\noindent {\bf Whole network.}
The first non-implied necessary conditions  to be $\epsilon$-optimal consist of the required minimum capacity to be added to the whole network.
These are obtained by setting to one all values of $\mathbf{d}$ corresponding to transmission capacity investment variables. The other values of $\mathbf{d}$ are set to zero.

Figure~\ref{fig:whole-cap} shows the values of the required new capacity to sustain a certain $\epsilon$-optimality for different values of $\epsilon$. 
The required new capacity drops rapidly for small values of $\epsilon$ and, for a suboptimality coefficient $\epsilon$ of 10\%, it has already been divided by a factor two compared to the optimum.
However, for larger $\epsilon$, the decrease slows down, and the necessary conditions start reaching a slowly decreasing plateau around 60 TWkm.

\noindent {\bf National lines.}
These first results show how much capacity should be added to the initial network to keep costs above a certain $\epsilon$-optimality.
National transmission system operators could also be interested in the minimum capacity required to connect their country to the rest of the system.
It can be identified by setting all values in $\mathbf{d}$ to zero except for those corresponding to the lines connected to a given country, in which case they are set to one.

Figure~\ref{fig:countries-cap} focuses on the four countries with the highest added incumbent capacity (in TWkm) in the optimal solution.
In decreasing-capacity order, those are France, Germany, Great Britain and Norway.
For the two last ones, the necessary additional capacity converges to 0 TWkm for a suboptimality coefficient $\epsilon$ of less than 5\% even though they had around 20 TWkm of new installed capacity in the optimal solution. 
France and Germany start from an even higher capacity level in the optimal solution, with 58 and 34 TWkm of added incumbent capacity.
When the coefficient $\epsilon$ reaches 20\%, they still have a non-zero additional capacity, with respectively 5 and 8 TWkm, but the decrease is greater than 90\% for France and 75\% for Germany.\\

\noindent {\bf Individual lines.}
Necessary conditions can be used to identify critical lines in the network. In this case, only one value in $\mathbf{d}$ is set to 1.
Figure~\ref{fig:lines-cap} shows the value of the necessary conditions for the four lines with the largest capacity increase (in TWkm) in the optimal solution: ES-FR, FR-IT, FR-GB, and DE-FR.

The main conclusion drawn from this graph is that no individually line needs to be necessarily expanded to avoid a suboptimality greater than 2.5\%.

\subsection{Necessary conditions on storage capacity}

In the model, there is no pre-existing storage capacity and investment can be made at each bus.
In the optimal set-up, 249 GW of Li-ion batteries are built and store 98 TWh over the simulated year.
However, Figure~\ref{fig:storage-cap} shows the necessary condition reaches 0 GW for a suboptimality coefficient $\epsilon$ as small as 5\%.

\subsection{Necessary conditions on RES capacity}

Finally, investments in renewable energy sources, including onshore and offshore wind turbines and utility-scale PV panels, are analysed.
Four types of necessary conditions are computed: one per technology corresponding to the required minimum in new capacity for that technology and one for the required minimum in the sum of capacities in the three technologies. 
As for storage, a greenfield approach is used and the new capacity is equal to the total capacity that is installed.

Figure~\ref{fig:generation-cap} shows that investments in renewable energies are essential as the minimum capacity required to be $\epsilon$-optimal does not drop below 600 GW even as the suboptimality coefficient $\epsilon$ rises to 20\%. 
However, this is less clear for each RES technology individually.
While the minimum requirement for offshore wind stays consistent with increasing values of the suboptimality coefficient $\epsilon$, the necessary conditions for onshore wind and utility PV converge to 0 GW.\\

\section{Conclusion and future work} \label{sec:conclusion}

In this paper, a framework offering a change of focus for optimisation model methods was presented and applied for capacity expansion planning.
Deviating from cost-optimal focused studies, we advocate for the search of non-implied necessary conditions for $\epsilon$-optimality to inform decision-makers efficiently.\\

The concepts required to define this search in a generic case were formalised.
A methodology was then presented to derive necessary conditions in the specific context where conditions consist of constrained sums.
Finally, to illustrate the framework, this methodology was applied to a test case related to capacity expansion planning at the European level, focusing on the minimum investments in transmission, storage and generation required for $\epsilon$-optimality.\\

This work sets the ground for further developments of the presented framework.
First, the framework was specified for a fixed set of parameters that define the shape of the feasible space $\mathcal{X}$ and of the objective function $f$.
Changing the value of the parameters could thus impact $\mathcal{X}$ and $f$, and in turn, the optimal solution and the $\epsilon$-optimal spaces $\mathcal{X}^\epsilon$.
As a result, there is no guarantee that necessary conditions found for a fixed set of parameters would remain the same for a different set of parameters.
The concept of necessary conditions could thus be extended to overcome this limitation by defining sets of \textit{meta}-necessary conditions valid for different sets of parameters.
Determining such necessary conditions and providing guarantees on implications would require more advanced techniques than the one presented in this article.

Second, in this paper, we only presented an algorithmic solution for computing non-implied necessary conditions in the context where conditions consist of constrained sums of variables. 
It would be interesting to propose algorithmic solutions for other types of conditions.

Third, while we focused on cost-based $\epsilon$-optimality, this concept and the one of necessary conditions can naturally be extended to other objectives. 

Finally, it would be interesting to investigate whether other fields than capacity expansion planning could benefit from the framework introduced in this paper.\\

\bibliographystyle{IEEEtran}
\bibliography{pscc2022}

\begin{thebibliography}{10}
\providecommand{\url}[1]{#1}
\csname url@samestyle\endcsname
\providecommand{\newblock}{\relax}
\providecommand{\bibinfo}[2]{#2}
\providecommand{\BIBentrySTDinterwordspacing}{\spaceskip=0pt\relax}
\providecommand{\BIBentryALTinterwordstretchfactor}{4}
\providecommand{\BIBentryALTinterwordspacing}{\spaceskip=\fontdimen2\font plus
\BIBentryALTinterwordstretchfactor\fontdimen3\font minus
  \fontdimen4\font\relax}
\providecommand{\BIBforeignlanguage}[2]{{%
\expandafter\ifx\csname l@#1\endcsname\relax
\typeout{** WARNING: IEEEtran.bst: No hyphenation pattern has been}%
\typeout{** loaded for the language `#1'. Using the pattern for}%
\typeout{** the default language instead.}%
\else
\language=\csname l@#1\endcsname
\fi
#2}}
\providecommand{\BIBdecl}{\relax}
\BIBdecl

\bibitem{brill1990mga}
E.~D. Brill, J.~M. Flach, L.~D. Hopkins, and S.~Ranjithan, ``{MGA: A decision
  support system for complex, incompletely defined problems},'' \emph{IEEE
  Transactions on systems, man, and cybernetics}, vol.~20, no.~4, pp. 745--757,
  1990.

\bibitem{ravetz1999post}
I.~Ravetz, ``{What is post-normal science},'' \emph{Futures-the Journal of
  Forecasting Planning and Policy}, vol.~31, no.~7, pp. 647--654, 1999.

\bibitem{yue2018review}
X.~Yue, S.~Pye, J.~DeCarolis, F.~G. Li, F.~Rogan, and B.~{\'O}. Gallach{\'o}ir,
  ``{A review of approaches to uncertainty assessment in energy system
  optimization models},'' \emph{Energy strategy reviews}, vol.~21, pp.
  204--217, 2018.

\bibitem{trutnevyte2016does}
E.~Trutnevyte, ``{Does cost optimization approximate the real-world energy
  transition?}'' \emph{Energy}, vol. 106, pp. 182--193, 2016.

\bibitem{brill1979use}
E.~D. Brill~Jr, ``{The use of optimization models in public-sector planning},''
  \emph{Management Science}, vol.~25, no.~5, pp. 413--422, 1979.

\bibitem{ehrgott2005multicriteria}
M.~Ehrgott, \emph{{Multicriteria optimization}}.\hskip 1em plus 0.5em minus
  0.4em\relax Springer Science \& Business Media, 2005, vol. 491.

\bibitem{brill1982modeling}
D.~Brill, S.-Y. Chang, and L.~Hopkins, ``{Modeling to generate alternatives:
  The HSJ approach and an illustration using a problem in land use planning},''
  \emph{Management Science}, vol.~28, no.~3, pp. 221--235, 1982.

\bibitem{decarolis2011using}
J.~F. DeCarolis, ``{Using modeling to generate alternatives (MGA) to expand our
  thinking on energy futures},'' \emph{Energy Economics}, vol.~33, no.~2, pp.
  145--152, 2011.

\bibitem{PRICE2017356}
P.~James and K.~Ilkka, ``{Modelling to generate alternatives: A technique to
  explore uncertainty in energy-environment-economy models},'' \emph{Applied
  Energy}, vol. 195, pp. 356--369, 2017.

\bibitem{li2017investment}
F.~G. Li and E.~Trutnevyte, ``{Investment appraisal of cost-optimal and
  near-optimal pathways for the UK electricity sector transition to 2050},''
  \emph{Applied energy}, vol. 189, pp. 89--109, 2017.

\bibitem{nacken2019integrated}
L.~Nacken, F.~Krebs, T.~Fischer, and C.~Hoffmann, ``{Integrated renewable
  energy systems for Germany--A model-based exploration of the decision
  space},'' in \emph{2019 16th International Conference on the European Energy
  Market (EEM)}.\hskip 1em plus 0.5em minus 0.4em\relax IEEE, 2019, pp. 1--8.

\bibitem{sasse2019distributional}
J.-P. Sasse and E.~Trutnevyte, ``{Distributional trade-offs between regionally
  equitable and cost-efficient allocation of renewable electricity
  generation},'' \emph{Applied Energy}, vol. 254, p. 113724, 2019.

\bibitem{NEUMANN2021106690}
F.~Neumann and T.~Brown, ``{The near-optimal feasible space of a renewable
  power system model},'' \emph{Electric Power Systems Research}, vol. 190, p.
  106690, 2021.

\bibitem{bandaru2017data}
S.~Bandaru, A.~H. Ng, and K.~Deb, ``{Data mining methods for knowledge
  discovery in multi-objective optimization: Part A-Survey},'' \emph{Expert
  Systems with Applications}, vol.~70, pp. 139--159, 2017.

\bibitem{PyPSA}
T.~Brown, J.~H\"orsch, and D.~Schlachtberger, ``{PyPSA: Python for Power System
  Analysis},'' \emph{Journal of Open Research Software}, vol.~6, no.~4, 2018.

\bibitem{replan}
A.~{Dubois} and D.~{Radu}, ``{REplan},''
  \url{https://github.com/montefesp/REplan}.

\bibitem{tyndp18}
\BIBentryALTinterwordspacing
{ENTSO-E}, ``{TYNDP18. Joint Scenarios Data. Input Data}.'' [Online].
  Available: \url{https://tyndp.entsoe.eu/maps-data}
\BIBentrySTDinterwordspacing

\bibitem{PyPSAEur}
J.~Hoersch, F.~Hofmann, D.~Schlachtberger, and T.~Brown, ``{PyPSA-Eur: An open
  optimisation model of the European transmission system},'' \emph{Energy
  Strategy Reviews}, vol.~22, pp. 207 -- 215, 2018.

\bibitem{OPSD}
\BIBentryALTinterwordspacing
``{Open Power System Data. 2019. Data Package Time series. Version
  2019-06-05.}'' [Online]. Available:
  \url{https://data.open-power-system-data.org/time_series/2019-06-05}
\BIBentrySTDinterwordspacing

\bibitem{PFENNINGER20161251}
S.~Pfenninger and I.~Staffell, ``{Long-term patterns of European PV output
  using 30 years of validated hourly reanalysis and satellite data},''
  \emph{Energy}, vol. 114, pp. 1251--1265, 2016.

\bibitem{STAFFELL20161224}
I.~Staffell and S.~Pfenninger, ``{Using bias-corrected reanalysis to simulate
  current and future wind power output},'' \emph{Energy}, vol. 114, pp.
  1224--1239, 2016.

\bibitem{Ryberg2018}
D.~Ryberg, M.~Robinius, and D.~Stolten, ``{Evaluating land eligibility
  constraints of renewable energy sources in Europe},'' \emph{Energies},
  vol.~11, no.~5, p. 1246, May 2018.

\bibitem{jrc-open}
\BIBentryALTinterwordspacing
A.~Bocin, I.~Hidalgo~Gonzalez, K.~Kanellopoulos, and M.~De~Felice, ``{JRC Open
  Power Plants Database (JRC-PPDB-OPEN). European Commission, Joint Research
  Centre (JRC)},'' 2019. [Online]. Available:
  \url{http://data.europa.eu/89h/9810feeb-f062-49cd-8e76-8d8cfd488a05}
\BIBentrySTDinterwordspacing

\bibitem{eu2016}
\BIBentryALTinterwordspacing
{European Commission}, ``{EU Reference Scenario 2016 - Energy, transport and
  GHG emissions Trends to 2050},'' p.~68. [Online]. Available:
  \url{https://ec.europa.eu/energy/sites/ener/files/documents/20160713\%20draft_publication_REF2016_v13.pdf}
\BIBentrySTDinterwordspacing

\bibitem{jrc-hydro}
\BIBentryALTinterwordspacing
M.~De~Felice, G.~Peronato, and K.~Kavvadias,
  ``{energy-modelling-toolkit/hydro-power-database: JRC Hydro-power database -
  release 10 (Version v10)},'' 2021. [Online]. Available:
  \url{https://doi.org/10.5281/zenodo.5215920}
\BIBentrySTDinterwordspacing

\bibitem{RADU2022117700}
D.~Radu, M.~Berger, A.~Dubois, R.~Fonteneau, H.~Pandžić, Y.~Dvorkin,
  Q.~Louveaux, and D.~Ernst, ``Assessing the impact of offshore wind siting
  strategies on the design of the european power system,'' \emph{Applied
  Energy}, vol. 305, p. 117700, 2022.

\bibitem{pscc2022Code}
A.~{Dubois} and D.~{Radu}, ``{Computing Necessary Conditions for
  Near-Optimality in Capacity Expansion Planning Problems - Code},'' 2021,
  \url{https://zenodo.org/record/6107173}.

\bibitem{pscc2022Data}
A.~{Dubois}, D.~{Radu}, and M.~{Berger}, ``{Computing Necessary Conditions for
  Near-Optimality in Capacity Expansion Planning Problems - Dataset},'' 2021,
  \url{https://zenodo.org/record/5519081}.

\bibitem{epippy}
A.~{Dubois} and D.~{Radu}, ``{EPIPPy - Expansion Planning Input Preprocessing
  in Python},'' \url{https://github.com/montefesp/EPIPPy}.

\end{thebibliography}

\appendix

\section{Network model} \label{app:network-model}

Modelling and optimising the network is done using PyPSA \cite{PyPSA} in conjunction with REplan\cite{replan}.

\subsection{Topology - Buses and Lines}

The initial topology of the network is based on the TYNDP18 2027 reference grid developped by ENTSO-E\cite{tyndp18}.
It consists of bi-directional net transfer capacities (NTC) between countries or regions inside countries.
To obtain a one-node-per-country topology, nodes are clustered per country, outgoing lines capacities are summed, and intra-country lines are removed. 

Connections are modelled as bi-directional links using a transportation model. 
The initial capacity of each line is set to the maximum of both NTCs. 
Lines crossing seas are considered to be HVDC cables, while other lines are represented as HVAC lines. 
For simulating the N-1 stability constraint, the maximum power flow across any line is set to 70\% of its installed capacity (as suggested in \cite{PyPSAEur}).

For keeping the expansion realistic, an upper bound is fixed on the maximum capacity per line. 
This upper bound is set based on the NTCs of the 'Global Climate Action 2040' scenario of TYNDP2018.
However, to provide some slack to the model, this capacity is multiplied by a pre-defined factor of 3. 
Note that for a multiplication factor equal to 2, around 5\% of the load was shed in the optimal solution.

\subsection{Load}

The model is solved at a 2-hourly resolution. 
At each time step, the load must be satisfied or, is shed for a cost of 3k\euro/MWh. 
Hourly load series per country are extracted from the Open Power System Data project\cite{OPSD}.
The reference year used in this model is 2018.

\subsection{Generation and Storage Technologies}

The model contains generation and storage technologies. For each technology, one representative plant is used per node where the pre-existing capacity or capacity expansion potential is not null. 
As detailed below, some of these technologies are expandable and others are not.\\

\noindent \textbf{Technologies with expandable capacity.}
Dispatchable capacity can be deployed in the form of CCGT and OCGT. They are the only technologies that produce CO2 emissions when generating electricity.

Short-term storage can be built as Li-Ion batteries. Those batteries are characterised by two elements: their peak power capacity and the maximum duration during which they can discharge this power. 
In the test case, this second element is fixed to 4 hours and multiplying by the peak power gives the storage capacity of the battery.

For these three technologies, no initial capacity and no upper limit on the amount of new capacity are considered.

Three types of renewable energy sources are added to the model: onshore wind generators, offshore wind generators and utility-scale PV power plants.
The per-country capacity factors profiles are obtained through Renewables.ninja, presented in \cite{PFENNINGER20161251, STAFFELL20161224}, while expansion potential are computed via GLAES \cite{Ryberg2018}. 
GLAES is parametrised such that, on a cumulative basis, a maximum of 447 GW can be built for onshore wind, 1077 GW for offshore wind and 1150 GW for PV. 
No initial capacity is considered. 
The energy produced by those generators can be curtailed without incurring any supplementary cost.\\

\noindent \textbf{Technologies with fixed capacity.}
New investments in nuclear power are not considered.
Generators in Belgium and Germany, and those commissioned before 1980, are removed from the model.
Using the JRC Open Power Plants 
database \cite{jrc-open}, this leads to 94GW of capacity which is in line with the projections made in the 2016 EU Reference Scenario \cite{eu2016}.

Hydro-power is modelled through the addition of pre-existing run-of-river generators with a capacity of 34 GW, reservoirs with 105 GW and pumped-hydro storage with 55 GW.
Capacities and locations around Europe are extracted from the JRC Hydro-power plants database\cite{jrc-hydro}.
For more information on the modelling of input flows, the interested reader can refer to the supplementary material of \cite{RADU2022117700}.

\subsection{Input parameters and data}
\balance

All experiments can be reproduced using the code which is available at \cite{pscc2022Code}.
The input data used for generating the results presented in this paper can be retrieved in \cite{pscc2022Data} and is preprocessed using the open-source tool EPIPPy \cite{epippy}. 
The repository also contains the output of the PyPSA runs and a document describing the techno-economic parameters used in the model and the sources from which they were determined. 

\end{document}